\documentclass[twocolumn]{autart}    
\usepackage{amsmath,amssymb}                                     
\usepackage{algpseudocode} 
\usepackage{float} 
\usepackage{algorithm}
\usepackage[utf8]{inputenc} 
\usepackage[T1]{fontenc}    
\usepackage{amsmath} 
\usepackage{graphicx}
\usepackage{booktabs}
\newtheorem{theorem}{\bf{Theorem}}

\newtheorem{assumption}{Assumption}

\newtheorem{lemma}{\bf{Lemma}}

\newtheorem{remark}{\bf{Remark}}

\makeatletter
\providecommand{\qedsymbol}{$\blacksquare$}  
\newenvironment{proof}[1][Proof]{%
\par\noindent\textit{#1.}\ }{%
\hfill\qedsymbol\par}
\makeatother
\begin{document}

\begin{frontmatter}

\title{Random Reshuffling-Based Distributed Nash Equilibrium Seeking } 

\thanks[footnoteinfo]{This work was supported in part by the Joint Funds of the National
Natural Science Foundation of China under Grant U24A20258, in
part by the National Natural Science Foundation of China under
Grants 92367205, in part by the Zhejiang Provincial Natural Science
Foundation of China under Grant LRG25F030001, in part by the
funding of Leading Innovative and Entrepreneur Team Introduction
Program of Zhejiang under Grant 2023R01006, in part by the
Fundamental Research Funds for the Provincial Universities of
Zhejiang under Grant RF-C2023007.
Jun Hu, Chao Sun, Bo Chen, Jianzheng Wang, Zheming Wang
are with the Department of Automation, Zhejiang University of
Technology, Hangzhou 310023, China. Jun Hu, Chao Sun, Bo Chen,
Jianzheng Wang, Zheming Wang are also with Zhejiang Key Laboratory of Intelligent Perception and Control for Complex Systems,
Hangzhou 310023, China.\\
Corresponding author: Bo Chen (bchen@aliyun.com).}

\author[ZJUT,ZJ]{Jun Hu}
\author[ZJUT,ZJ]{Chao Sun}
\author[ZJUT,ZJ]{Bo Chen}
\author[ZJUT,ZJ]{Jianzheng Wang}
\author[ZJUT,ZJ]{Zheming Wang}

\address[ZJUT]{Department of Automation, Zhejiang University of Technology, Hangzhou 310023, China}
\address[ZJ]{hejiang Key Laboratory of Intelligent Perception and Control for Complex Systems, Hangzhou 310023, China}
          
\begin{keyword}                           
distributed Nash equilibrium seeking; random reshuffling; stochastic games.               
\end{keyword}                             

\begin{abstract}
This paper studies distributed Nash equilibrium seeking for 
stochastic games under partial-decision information. Each player has access
only to its local sample functions and maintains local estimates of the other
players' actions through neighbor communication. We propose a random-reshuffling
stochastic pseudo-gradient scheme in which each player visits all local
components exactly once per epoch and performs projection only at the end of
the epoch. Compared with with-replacement stochastic approximation, random
reshuffling removes the epoch-start sampling noise, but the movement of the
inner iterates and the use of local estimates introduce additional drift terms.
To handle these effects, we develop a coupled decision-consensus error analysis
that jointly controls the reshuffling-induced inner drift and the estimate
disagreement. Under strong monotonicity and local Lipschitz regularity of the
pseudo-gradient mapping, the proposed method converges linearly to an
\(O(\alpha^2)\) mean-square neighborhood of the Nash equilibrium with constant
stepsizes. With diminishing stepsizes, exact mean-square convergence is
established. Numerical examples on networked  games show that random
reshuffling achieves smaller terminal errors than with-replacement stochastic
updates under matched component-gradient and communication budgets.
\end{abstract}

\end{frontmatter}

\section{Introduction}

Game theory provides a framework for modeling strategic interactions among multiple self-interested decision makers.
Among the solution concepts for noncooperative games, Nash equilibrium (NE) is one of the most fundamental, as it characterizes operating points at which no player can unilaterally improve its own objective. NE seeking has found wide applications in 
communication networks \cite{alpcan2002cdma}, smart grids \cite{5628271}, resource allocation \cite{1532399}, and multi-robot coordination \cite{10.1007/978-3-030-05816-6_16}, where multiple agents are coupled through their decision variables and cost functions while pursuing individual objectives.
In many modern engineering systems, however, these agents are distributed via a network and can communicate only with  neighbors. As a result, each player may not have direct access to the full action profile of all the others, which naturally leads to the problem of distributed NE seeking under partial-decision information.\par

To solve such networked games, distributed Nash equilibrium seeking has been
widely studied under different information, uncertainty, and communication
settings. Under partial-decision information, \cite{salehisadaghiani2019admm} proposed an ADMM-based method in which
players exchange local action estimates with their neighbors. \cite{Pavel2020_TAC} developed a doubly-augmented operator-splitting framework
for seeking variational generalized Nash equilibria with fixed stepsizes.
More recently, \cite{meng2023linear} established linear convergence
of a distributed gradient-tracking method for multi-cluster games under
partial-decision information.

Distributed Nash equilibrium seeking has also been investigated under
stochastic observations, uncertain agent dynamics, and restricted
communication. \cite{franci2022stochastic} developed
distributed stochastic-approximation and variance-reduced methods for
generalized Nash equilibrium seeking under partial-decision information. \cite{huang2022distributed} proposed a distributed stochastic Nash
equilibrium learning method for locally coupled network games with unknown
parameters. \cite{feng2023adaptively} studied adaptive distributed
Nash equilibrium seeking for uncertain heterogeneous multi-agent systems,
whereas \cite{chen2022quantization} investigated distributed Nash
equilibrium seeking under quantized communication. In addition, \cite{pang2020distributed} developed a consensus-based gradient-free method
requiring only local cost-function evaluations. Moreover, \cite{Ye2017Consensus} developed a consensus-based distributed Nash equilibrium seeking algorithm under partial-decision information, where each player estimates the actions of other players through local communication and updates its own action based on the estimated joint strategy.
\cite{Pang2021LimitedCost} further studied distributed Nash equilibrium seeking with limited cost-function knowledge and proposed a consensus-based gradient-free method that enables agents to seek the Nash equilibrium using only local cost measurements rather than explicit gradient information.
\par

Random Reshuffling (RR) is the without-replacement counterpart of SGD \cite{haochen2019random}, \cite{shamir2016without}. At the beginning of each epoch, RR generates a fresh permutation of the component indices and processes every component exactly once in the resulting order, thereby avoiding repeated revisits of already-sampled components within the same pass \cite{emmanouilidis2024stochastic}, \cite{shamir2016without}. Existing optimization results have shown that RR can outperform with-replacement SGD after a finite number of epochs \cite{haochen2019random} and, under constant step-sizes, drive the iterates to a smaller steady-state neighborhood than uniform with-replacement sampling \cite{ying2018stochastic}. More recently, in  variational-inequality problems, stochastic extragradient with RR has been shown to attain arbitrary accuracy without the large-batch requirements often needed by classical with-replacement schemes \cite{emmanouilidis2024stochastic}. In addition, RR has also demonstrated favorable convergence behavior in distributed optimization over networks \cite{huang2023distributed}. These results suggest that RR is a promising sampling mechanism for distributed stochastic Nash equilibrium seeking, and naturally raise the question of whether its advantages can be similarly exploited in distributed networked games.
\par
However, the role of RR in distributed Nash equilibrium seeking
remains largely unexplored, especially under partial-decision information.
Existing distributed stochastic NE methods mainly rely on independently sampled
stochastic approximation updates, with primary emphasis on pseudo-gradient
design, local estimation, and communication mechanisms. In contrast, the effect
of without-replacement sampling on coupled pseudo-gradient dynamics and network
disagreement has received much less attention. This motivates the present work,
where we study whether RR can serve as a principled sampling
mechanism for distributed stochastic NE seeking and improve solution accuracy
over conventional with-replacement SGD schemes.

To the best of our knowledge, RR has not been systematically
studied in distributed stochastic Nash equilibrium seeking. The main
contributions of this paper are threefold.

(1) We develop a RR-based distributed Nash equilibrium seeking
scheme under partial-decision information. Each player independently reshuffles
its local component functions, updates its decision using local estimates of the
joint action, and exchanges information only with its neighbors. Unlike
with-replacement stochastic pseudo-gradient methods, the proposed scheme visits
each local component exactly once per epoch and performs projection only at the
end of each epoch.

(2) We establish an epoch-wise error decomposition that separates the two
coupled perturbations arising in the proposed scheme: the reshuffling-induced
inner drift caused by within-epoch decision updates and the disagreement error
caused by evaluating component pseudo-gradients at local estimates rather than
at the true joint action. To accommodate epoch-end projection, we introduce a
compact local domain containing the auxiliary inner trajectories and show that
the required pseudo-gradient boundedness follows from local Lipschitz continuity
on this domain. We further derive an epoch-level consensus-error recursion that
quantifies the interaction between reshuffling and network disagreement.

(3) We establish convergence guarantees for both constant and diminishing
stepsizes. Under constant stepsizes, the decision and consensus errors converge
to \(O(\alpha^2)\) mean-square neighborhoods. Under diminishing stepsizes
satisfying standard summability conditions, both errors converge to zero in
mean square, yielding exact convergence to the Nash equilibrium. Numerical
experiments demonstrate that the proposed RR scheme achieves improved terminal
accuracy over its with-replacement counterpart under matched oracle complexity.\par

\noindent\textbf{Notation.}
Throughout this paper, \(0\) denotes either the real number zero or a zero
vector with an appropriate dimension. Let \([N]:=\{1,\ldots,N\}\) and
\([m]:=\{1,\ldots,m\}\). The symbols \(\mathbb R\) and \(\mathbb R^N\)
denote the set of real numbers and the \(N\)-dimensional Euclidean space,
respectively. All vectors are column vectors. The vector \(\mathbf 1_N\)
denotes the \(N\)-dimensional all-one vector, and \(I_N\) denotes the
\(N\times N\) identity matrix.
The Euclidean norm and inner product are denoted by \(\|\cdot\|\) and
\(\langle\cdot,\cdot\rangle\), respectively. The absolute value of a scalar is
denoted by \(|\cdot|\). \(\lambda_{\min}(\cdot)\) and
\(\lambda_{\max}(\cdot)\) denote its smallest and largest eigenvalues of a matrix,
respectively. The Kronecker product is denoted by \(\otimes\). For a closed
convex set \(\Omega\), \(\mathbb P_{\Omega}[\cdot]\) denotes the Euclidean
projection onto \(\Omega\), and \(\operatorname{dist}(x,\Omega)\) denotes the
distance from \(x\) to \(\Omega\). The expectation operator is denoted by
\(\mathbb E[\cdot]\). For a differentiable function \(f\), 
\(\nabla_{x_i}f(x)\) denotes the gradient of \(f\) with respect to \(x_i\)
evaluated at \(x\).
\section{Problem Formulation}
\subsection{Noncooperative Game and Nash Equilibrium}
Consider a Noncooperative game with $N$ players, where each player $i$ aims to
solve the following expected-value optimization problem:
\begin{align}
\min_{x_i\in\Omega_i} \ J_i(x_i,x_{-i})
:= \mathbb{E}_{\xi_i}\!\left[f_i(x_i,x_{-i};\xi_i)\right].\label{problem1}
\end{align}

In practice, the expected objective is approximated by the following
sample-average form:
\begin{align}
\min_{x_i\in\Omega_i} \ f_i(x_i,x_{-i}),
\qquad
f_i(x_i,x_{-i})
:= \frac{1}{m}\sum_{\ell=1}^{m}f_i(x_i,x_{-i};\xi_i^\ell).
\label{problem}
\end{align}
where $x_i\in\Omega_i\subset\mathbb{R}$ is the action of player $i$, and
$x_{-i}\in\Omega_{-i}\subset\mathbb{R}^{N-1}$ collects the actions of all
players except player $i$. Here, $\xi_i$ denotes a local random variable
associated with player $i$, and $\{\xi_i^\ell\}_{\ell=1}^m$ is a collection of
local samples (or scenarios) drawn from its distribution. For simplicity, each
player is assumed to have the same number $m$ of local samples. Equivalently,
by defining$f_i(x_i,x_{-i};\ell):=f_i(x_i,x_{-i};\xi_i^\ell)$,
the sample-average objective in \eqref{problem} can be rewritten as
\[
f_i(x_i,x_{-i})=\frac{1}{m}\sum_{\ell=1}^{m} f_i(x_i,x_{-i};\ell),
\]
where $\ell\in\{1,\ldots,m\}$ indexes the local component functions. Each
component may correspond to one historical sample, one simulated realization,
or one mini-batch available to player $i$.

\begin{remark}
The sample-average formulation in \eqref{problem} is a standard and
practically meaningful approximation of the expected-value game
\eqref{problem1}. Such sample-average approximation (SAA) schemes have
been widely used in stochastic optimization and stochastic equilibrium
problems\cite{kleywegt2002sample,xuu2013stochastic}. In this paper, we focus on the Nash equilibrium of the approximating
finite sum game \eqref{problem}. Under standard SAA consistency conditions,
the equilibrium of \eqref{problem} is expected to approach that of the
underlying expected-value game \eqref{problem1} as the sample size
increases, although this statistical approximation issue is not the main focus
here. Instead, our emphasis is on the algorithmic aspect: the finite sum
structure in \eqref{problem} is particularly suitable for the design and
analysis of RR, and it also motivates the sample-driven
benchmarks considered in Section~4.
\end{remark}
To guarantee the existence and uniqueness of the Nash equilibrium of problem \eqref{problem}, we next state several standard assumptions,
\begin{assumption}
$\Omega:=\Omega_1\times\Omega_2\times\dots\Omega_N$ is nonempty, convex and compact.\label{set}
\end{assumption}
\begin{assumption}
For each $i\in\{1,\dots,N\}$ and $\ell\in\{1,\dots,m\}$, the function $f_i(x;\ell)$ is continuously differentiable for $x$ in $\Omega$. In addition, for any fixed $x_{-i}$, $f_i(x_i,x_{-i};\ell)$ is convex in $x_i$, and $\nabla_{x_i} f_i(x;\ell)$ is $L$-Lipschitz continuous in $x_i$ over $\Omega_i$.
\label{fi}
\end{assumption}
\begin{assumption}
  The pseudo-gradient mapping $\nabla F(x):=[\nabla_{x_1} f_1(x), \nabla_{x_2} f_2(x),\dots \nabla_{x_N} f_N(x)]^\mathbf{T}\in\mathbb{R}^N$ is strongly monotone with modulus $\mu$ i.e. $\langle\nabla F(x)-\nabla F(y), x-y \rangle \ge \mu\|x-y\|^2$.\label{mapping}
\end{assumption}
Under Assumptions \ref{set}, \ref{fi} and \ref{mapping}, there exists a unique Nash equilibrium $\mathbf{x}_{\star}$\cite[theorem3]{DBLP:journals/corr/abs-1212-6235}.
\subsection{Random Reshuffling(RR)}
Unlike existing methods for solving \eqref{problem}, which typically rely on a with-replacement sampling scheme, we instead employ an incremental strategy based on RR. In RR, the local component functions are accessed sequentially according to a random permutation within each epoch.

Specifically, at the beginning of epoch $k$, each player $i$ generates a random permutation $\{\pi_0^i,\pi_1^i,\dots,\pi_{m-1}^i\}$ of the index set $\{1,2,\dots,m\}$. Then, along this permuted order, player $i$ performs $m$ successive inner updates:
\begin{align*}
&x_{i,k}^{\ell+1}
=
x_{i,k}^{\ell}
-
\alpha_k
\nabla_{x_i} f_i(\mathbf{x}_k^\ell;\pi_\ell^i),\\
&\text{where}\quad i\in[n],\ \ell=0,1,\dots,m-1.
\end{align*}
\begin{figure}[h]
    \centering
    \includegraphics[width=1.0\linewidth]{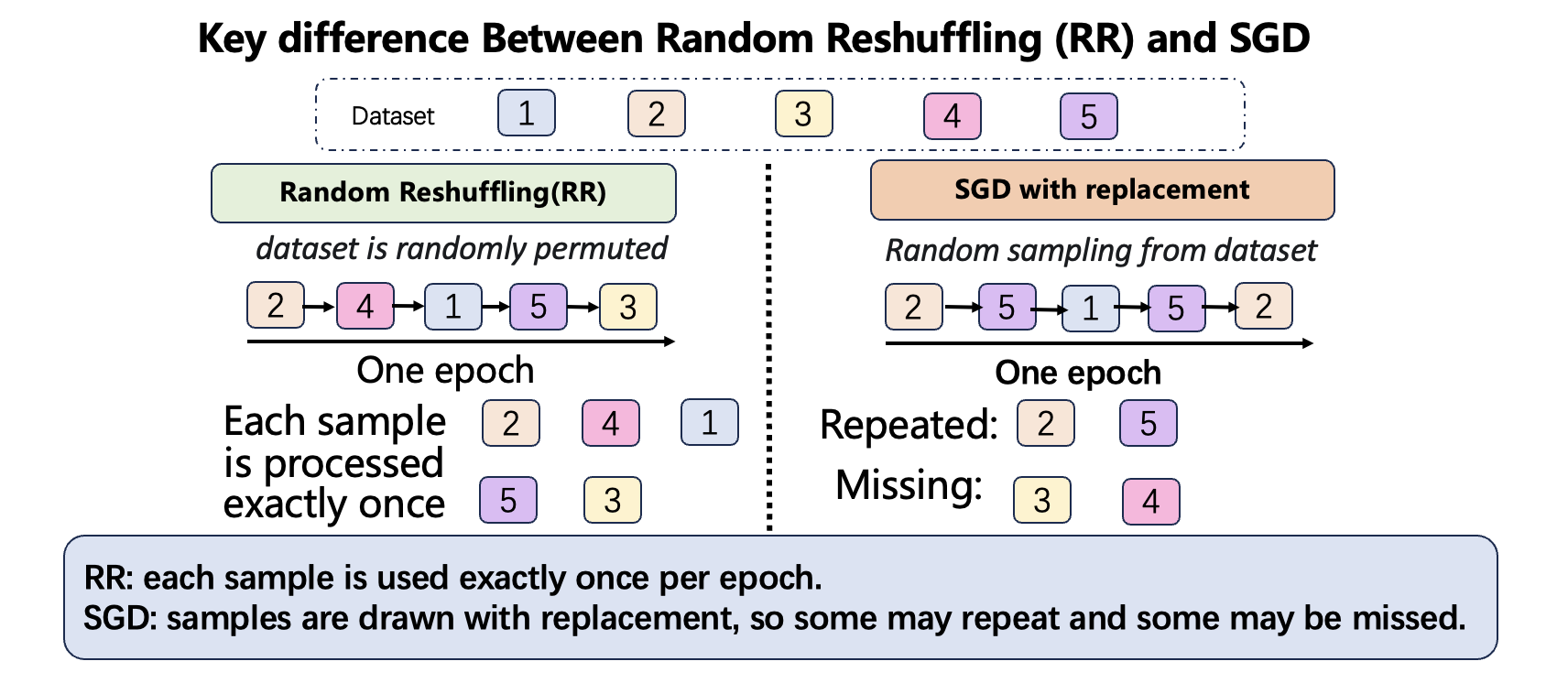}
    \caption{Key Difference between Random Reshuffling (RR) and SGD}
    \label{RR_syt}
\end{figure}
Moreover, RR is related to the full-gradient method in that both
fully exploit the finite sum structure of the sample-average objective; see,
e.g., \cite{shamir2016without,mishchenko2020random}. However, a full-gradient
step evaluates all $m$ component gradients at the same iterate, whereas RR
visits these components sequentially within one epoch. Hence, RR may be viewed
as a lower-cost incremental surrogate of a full-gradient pass. Moreover, unlike
with-replacement SGD, RR processes each component exactly once per epoch and
has been shown to enjoy improved convergence behavior after finite epochs.

\section{Random Reshuffling Based Nash Equilibrium Seeking under Partial Decision Information Case}
\label{sec:partial_information}
We consider distributed Nash equilibrium seeking under partial-decision
information, where each player maintains local estimates of the joint action and
updates them through neighbor communication. The proposed RR scheme introduces
two coupled perturbations: the within-epoch reshuffling drift and the
disagreement error caused by local estimates. Accordingly, the convergence
analysis couples the decision-error and consensus-error recursions. We first
introduce the communication model and the resulting compact dynamics. Then describe the communication model and the
resulting compact estimate dynamics.
\subsection{Communication Graph and Compact Dynamics}

Let the players communicate over an undirected graph
$\mathcal G=(\mathcal V,\mathcal E)$, where \(\mathcal V=[N]\) is the player set and
\(\mathcal E\subseteq \mathcal V\times\mathcal V\) is the edge set. For each
player \(i\), define its neighbor set as
\[
\mathcal N_i:=\{j\in[N]:(i,j)\in\mathcal E\}.
\]
Let \(a_{ij}\ge0\) be the edge weight associated with the
communication link \((i,j)\), and let \(L_{\mathcal G}\in\mathbb R^{N\times N}\)
be the corresponding weighted graph Laplacian.

\begin{assumption}
\label{ass:communication}
The communication graph \(\mathcal G=(\mathcal V,\mathcal E)\) is undirected
and connected. The edge weights \(a_{ij}\) are compatible with
\(\mathcal G\) and symmetric, that is,
\(a_{ij}>0\) only if \(j\in\mathcal N_i\), and
\(a_{ij}=a_{ji}\). 
\end{assumption}
\begin{algorithm}[h]
\caption{RR with partial-decision information}
\label{partialinformation}
\begin{algorithmic}[1]
\Require Initial action \(x_{i,0}\in\Omega_i\), initial estimate
\(\mathbf y_{i,0}\), stepsizes \(\{\alpha_k\}\), consensus stepsize
\(\{w_k\}\), adjacency weight \(a_{ij}\ge 0\)
\For{epoch \(k=0,1,2,\ldots,K-1\)}
    \State Player \(i\) independently samples a permutation
    \[
    \{\pi_0^i,\pi_1^i,\ldots,\pi_{m-1}^i\}
    \quad \text{of} \quad
    \{1,2,\ldots,m\}.
    \]
    \State \(x_{i,k}^0\gets x_{i,k}\), \(\mathbf y_{i,k}^0\gets \mathbf y_{i,k}\)
    \For{\(\ell=0,1,\ldots,m-1\)}
        \State Player \(i\) evaluates the local component pseudo-gradient
        \[
        \nabla_{x_i}f_i(\mathbf y_{i,k}^{\ell};\pi_\ell^i).
        \]
        \State Player \(i\) updates its action by
        \[
        x_{i,k}^{\ell+1}
        \gets
        x_{i,k}^{\ell}
        -
        \alpha_k
        \nabla_{x_i}f_i(\mathbf y_{i,k}^{\ell};\pi_\ell^i).
        \]
        \For{\(j=1,\ldots,N\)}
            \State  Player \(i\)'s local estimate of player
\(j\)'s action at the \(\ell+1\)-th inner step of epoch \(k\) by 
\begin{align*}
            y_{ij,k}^{\ell+1}
            \gets
            &y_{ij,k}^{\ell}\\
            -&w_k
            \left(
            \sum_{s=1}^{N}
            a_{is}
            \left(
            y_{ij,k}^{\ell}-y_{sj,k}^{\ell}
            \right)
            +
            a_{ij}
            \left(
            y_{ij,k}^{\ell}-x_{j,k}^{\ell}
            \right)
            \right).
            \end{align*}
        \EndFor
    \EndFor
    \State \(x_{i,k+1}\gets \mathbb{P}_{\Omega_i}[x_{i,k}^{m}\)], \(\mathbf y_{i,k+1}\gets \mathbf y_{i,k}^{m}\)
\EndFor
\State \Return \(x_{i,K}\)
\end{algorithmic}
\end{algorithm}
In the partial-decision-information setting, each player \(i\) maintains a local
estimate $\mathbf y_{i,k}^{\ell}:=
\operatorname{col}
(y_{i1,k}^{\ell},\ldots,y_{iN,k}^{\ell})
\in\mathbb R^N$, where \(y_{ij,k}^{\ell}\) denotes player \(i\)'s estimate of player \(j\)'s action
at the \(\ell\)-th inner step of epoch \(k\). The  estimate vector is
$
\mathbf y_k^\ell
:=
\operatorname{col}
(\mathbf y_{1,k}^{\ell},\ldots,\mathbf y_{N,k}^{\ell})
\in\mathbb R^{N^2}.
$
The true joint action is $\mathbf x_k^\ell
:=
\operatorname{col}(x_{1,k}^\ell,\ldots,x_{N,k}^\ell)
\in\mathbb R^N$.
For analysis, define the  disagreement at the $\ell-$th inner step of epoch $k$:
\begin{align}\label{consensus}
    \bar{\mathbf y}_k^\ell
:=
\mathbf y_k^\ell-\mathbf 1_N\otimes \mathbf x_k^\ell.
\end{align}
The vector \(\mathbf 1_N\otimes \mathbf x_k^\ell\) is only an analysis reference
for measuring disagreement; it is not used by any player in the implementation.
\
For each epoch $k$, $x_{i,k}^\ell$ denotes the intermediate action of player $i$
after $\ell$ reshuffled component updates, with $x_{i,k}^0:=x_{i,k}$. Then the compact form of the epoch-end projection is $\mathbf x_{k+1}=\mathbb P_{\Omega}[\mathbf x_k^m]$.
For a given reshuffled index tuple $\pi_\ell:=(\pi_\ell^1,\ldots,\pi_\ell^N)$, define the component pseudo-gradient as
\[
\nabla F_{\pi_\ell}(x)
:=
\operatorname{col}
\left(
\nabla_{x_1}f_1(x;\pi_\ell^1),
\ldots,
\nabla_{x_N}f_N(x;\pi_\ell^N)
\right).
\]
Since each player visits every local component exactly once within an epoch, we have
\[
\sum_{\ell=0}^{m-1}\nabla F_{\pi_\ell}(x)
=
m\nabla F(x),
\]
where $\nabla F(x)$ denotes the averaged pseudo-gradient of the game.

The update in Algorithm~\ref{partialinformation} is distributed. Player \(i\)
uses only its own action \(x_{i,k}^{\ell+1}\) and the estimate vectors received
from its neighbors. The compact expressions below are used only for convergence
analysis after stacking all local variables.

Define the  local-estimate pseudo-gradient by
\[
\widehat G_{\pi_\ell}(\mathbf y_k^\ell)
:=
\operatorname{col}
\left(
\nabla_{x_1}f_1(\mathbf y_{1,k}^{\ell};\pi_\ell^1),
\ldots,
\nabla_{x_N}f_N(\mathbf y_{N,k}^{\ell};\pi_\ell^N)
\right).
\]
For a true action profile \(\mathbf x\in\mathbb R^N\), define
\begin{align*}
  \widehat G_{\pi_\ell}(\mathbf 1_N\otimes \mathbf x)&:=\nabla F_{\pi_\ell}(\mathbf x)\\
                                                     &= \operatorname{col}
\left(
\nabla_{x_1}f_1(\mathbf x;\pi_\ell^1),
\ldots,
\nabla_{x_N}f_N(\mathbf x;\pi_\ell^N)
\right).
\end{align*}
Then the  action recursion is
\begin{align}
\mathbf x_k^{\ell+1}
&=
\mathbf x_k^\ell
-
\alpha_k\widehat G_{\pi_\ell}(\mathbf y_k^\ell),
\qquad \ell=0,\ldots,m-1 .
\label{eq:partial_x_inner}
\end{align}
Equivalently, using \eqref{consensus}, we have
\begin{align}
\label{compact_formyk}
\mathbf y_k^{\ell+1}
=
\mathbf y_k^\ell-w_kH\bar{\mathbf y}_k^\ell.
\end{align}
The local estimate recursion induces the following compact disagreement dynamics:
\begin{align}
\bar{\mathbf y}_k^{\ell+1}
&=
(I_{N^2}-w_kH)\bar{\mathbf y}_k^\ell
+
\alpha_k\mathbf 1_N\otimes
\widehat G_{\pi_\ell}(\mathbf y_k^\ell).
\label{eq:partial_disagreement_inner}
\end{align}
Here, $H:=L_{\mathcal G}\otimes I_N+\Delta
\in\mathbb R^{N^2\times N^2}
$, where \(L_{\mathcal G}\) is the weighted graph Laplacian induced by the edge
weights \(a_{ij}\), and
$
\Delta
:=
\operatorname{diag}
(a_{11},a_{12},\ldots,a_{1N},a_{21},\ldots,a_{NN})
\in\mathbb R^{N^2\times N^2}$.
We assume
that the connected graph make \(H\) positive
definite. consensus stepsize \(w_k>0\) is chosen such that $q:=\|I_{N^2}-w_kH\|<1$.\par
For example, in the Euclidean norm this holds whenever \(H\) is symmetric
positive definite and $0<w_k<\frac{2}{\lambda_{\max}(H)}$. \par
In the partial-decision-information case, the component pseudo-gradient is
evaluated at local estimates rather than at the true joint action. Therefore,
the regularity required for the analysis must hold not only near the feasible
action set \(\Omega\), but also near the consensus manifold
\(\{\mathbf 1_N\otimes x:x\in\Omega_R\}\) in the  estimate space. The
set \(\mathcal D_{R,Y}\) below is a local neighborhood of this consensus
manifold. The radius \(Y\) is fixed a priori, and the subsequent boundedness
result shows that, under a sufficiently small stepsize, the estimate trajectory
remains inside this neighborhood.
\subsection{Local Domain For The Partial-Information Analysis}

Since projection is performed only at the end of each epoch, the inner
decision iterates may temporarily leave the feasible set. Moreover, under
partial-decision information, component pseudo-gradients are evaluated at
local estimates rather than at the true joint action. We therefore introduce
the following local domains only for the analysis.

For fixed constants \(R>0\) and \(Y>0\), define
\[
\Omega_R:=\{x\in\mathbb R^N:\operatorname{dist}(x,\Omega)\le R\},
\]
and
\[
\mathcal C_R:=\{\mathbf 1_N\otimes x:x\in\Omega_R\},
\]
\[
\mathcal D_{R,Y}:=
\left\{
\mathbf y\in\mathbb R^{N^2}:
\operatorname{dist}(\mathbf y,\mathcal C_R)\le Y
\right\}.
\]
Here, \(\Omega_R\) is an enlarged neighborhood of the feasible set, and
\(\mathcal D_{R,Y}\) is a \(Y\)-neighborhood of the consensus manifold over
\(\Omega_R\). Since \(\Omega\) is compact, both \(\Omega_R\) and
\(\mathcal D_{R,Y}\) are compact.

\begin{lemma}
\label{cor:partial_inner_boundedness}
Fix \(R>0\) and \(Y>0\). Assume that, on the local domains defined above,
the following local regularity conditions hold. For every component index tuple
\(\pi_\ell\), the local-estimate pseudo-gradient \(\widehat G_{\pi_\ell}\) is
well defined and \(L_y\)-Lipschitz continuous on \(\mathcal D_{R,Y}\), and the
true-action component pseudo-gradient \(\nabla F_{\pi_\ell}\) is
\(L_x\)-Lipschitz continuous on \(\Omega_R\). Moreover, the consistency relation
\[
\widehat G_{\pi_\ell}(\mathbf 1_N\otimes x)
=
\nabla F_{\pi_\ell}(x),
\qquad
\forall x\in\Omega_R,
\]
holds. Let \(L:=\max\{L_x,L_y\}\).

Since \(\Omega_R\) is compact and the number of component index tuples is
finite, define
\[
G_R:=
\max_{\pi_\ell}
\sup_{x\in\Omega_R}
\|\nabla F_{\pi_\ell}(x)\|
<\infty,
\qquad
M_Y:=G_R+LY .
\]
Assume further that the communication stepsize satisfies $\|I-w_kH\|\le q<1 \;\text{for any} \;  k$. Let
\[
\bar\alpha_Y
:=
\min
\left\{
\frac{R}{mM_Y},
\frac{(1-q)Y}{\sqrt N M_Y},
\frac{(1-q^m)Y}
{
\sqrt N M_Y
\left(
\frac{1-q^m}{1-q}+m
\right)
}
\right\}.
\]
If \(0<\alpha_k\le \bar\alpha_Y\) and
\(\|\bar{\mathbf y}_0\|\le Y\), then for all \(k\ge0\) and
\[
\mathbf x_k^\ell\in\Omega_R,  
\qquad
\|\bar{\mathbf y}_k^\ell\|\le Y, \qquad \ell=0,\ldots,m.
\]
Moreover,
\[
\|\mathbf x_k^\ell-\mathbf x_k\|
\le
\alpha_k\ell M_Y,
\qquad
\ell=0,\ldots,m.
\]
\end{lemma}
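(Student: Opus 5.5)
The proof will be a double induction: an outer induction over epochs \(k\) and, inside each epoch, an inner induction over \(\ell=0,\ldots,m\). The outer induction hypothesis is that \(\mathbf x_k\in\Omega\) and \(\|\bar{\mathbf y}_k\|=\|\bar{\mathbf y}_k^0\|\le Y\).

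The base case \(k=0\) holds because \(x_{i,0}\in\Omega_i\) and \(\|\bar{\mathbf y}_0\|\le Y\) are assumed. After the base case, \(\mathbf x_{k+1}=\mathbb P_\Omega[\mathbf x_k^m]\in\Omega\) holds automatically. The real work is therefore inside an epoch and in the disagreement bound at \(\ell=m\), which must carry over to the next epoch. The projection changes \(\mathbf x\) but not \(\mathbf y\), and so \(\bar{\mathbf y}_{k+1}^0\neq\bar{\mathbf y}_k^m\) in general. This point is handled in the third step below.

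\emph{Step 1 (pseudo-gradient bound).} Suppose \(\mathbf x_k^\ell\in\Omega_R\) and \(\|\bar{\mathbf y}_k^\ell\|\le Y\). Then \(\mathbf y_k^\ell=\mathbf 1_N\otimes\mathbf x_k^\ell+\bar{\mathbf y}_k^\ell\) lies in \(\mathcal D_{R,Y}\), since its distance to \(\mathcal C_R\) is at most \(Y\). By the consistency relation and the \(L_y\)-Lipschitz property,
\[
\|\widehat G_{\pi_\ell}(\mathbf y_k^\ell)\|\le \|\nabla F_{\pi_\ell}(\mathbf x_k^\ell)\|+L\|\bar{\mathbf y}_k^\ell\|\le G_R+LY=M_Y .
\]
Because the segment between \(\mathbf 1_N\otimes\mathbf x_k^\ell\) and \(\mathbf y_k^\ell\) stays in \(\mathcal D_{R,Y}\), the Lipschitz estimate applies along it.

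\emph{Step 2 (inner action drift).} Use \eqref{eq:partial_x_inner} and the bound from Step 1 inductively to get \(\|\mathbf x_k^\ell-\mathbf x_k\|\le\alpha_k\ell M_Y\le \alpha_k mM_Y\le R\), where the last inequality uses \(\alpha_k\le R/(mM_Y)\). Since \(\mathbf x_k\in\Omega\), this gives \(\mathbf x_k^{\ell}\in\Omega_R\) for \(\ell\le m\), and the drift claim of the lemma follows at the same time.

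\emph{Step 3 (disagreement, the main obstacle).} From \eqref{eq:partial_disagreement_inner}, using \(\|\mathbf 1_N\otimes v\|=\sqrt N\|v\|\) and Step 1,
\[
\|\bar{\mathbf y}_k^{\ell+1}\|\le q\|\bar{\mathbf y}_k^\ell\|+\alpha_k\sqrt N M_Y .
\]
Under \(\alpha_k\le (1-q)Y/(\sqrt N M_Y)\), the right-hand side is at most \(qY+(1-q)Y=Y\), which keeps every inner step within \(Y\). Unrolling over the epoch also gives \(\|\bar{\mathbf y}_k^m\|\le q^m\|\bar{\mathbf y}_k^0\|+\alpha_k\sqrt N M_Y\frac{1-q^m}{1-q}\).

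For the epoch transition, write \(\bar{\mathbf y}_{k+1}^0=\bar{\mathbf y}_k^m+\mathbf 1_N\otimes(\mathbf x_k^m-\mathbf x_{k+1})\). Nonexpansiveness of \(\mathbb P_\Omega\) together with \(\mathbf x_k=\mathbb P_\Omega[\mathbf x_k]\) (because \(\mathbf x_k\in\Omega\)) gives \(\|\mathbf x_k^m-\mathbb P_\Omega[\mathbf x_k^m]\|\le\|\mathbf x_k^m-\mathbf x_k\|\le \alpha_k mM_Y\). Therefore
\[
\|\bar{\mathbf y}_{k+1}^0\|\le q^mY+\alpha_k\sqrt N M_Y\Big(\tfrac{1-q^m}{1-q}+m\Big)\le q^mY+(1-q^m)Y=Y,
\]
where the last inequality is exactly the third term in \(\bar\alpha_Y\). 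This closes the outer induction.

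The delicate points are two. The first is checking that \(\mathbf y_k^\ell\in\mathcal D_{R,Y}\) really licenses the Lipschitz bound, which needs convexity of the domain along the segment or a direct use of the local Lipschitz constant. The second is the projection-induced jump at the epoch boundary, which is why the extra \(+m\) appears in the third term of \(\bar\alpha_Y\).
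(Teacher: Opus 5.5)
Your proposal is correct and follows the paper's proof essentially step for step: the epoch induction, the $M_Y$ gradient bound via consistency plus Lipschitz continuity, the $\alpha_k\ell M_Y$ drift bound, the contraction $\|\bar{\mathbf y}_k^{\ell+1}\|\le q\|\bar{\mathbf y}_k^\ell\|+\alpha_k\sqrt N M_Y$, and the projection jump at the epoch boundary that produces the $+m$ term. Two small corrections. First, the bound $\|\mathbf x_k^m-\mathbb P_\Omega[\mathbf x_k^m]\|\le\|\mathbf x_k^m-\mathbf x_k\|$ comes from the nearest-point property of the projection: the left side equals $\operatorname{dist}(\mathbf x_k^m,\Omega)$ and $\mathbf x_k\in\Omega$. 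It does not follow from nonexpansiveness, which would only give a factor $2$ via the triangle inequality. Second, Step 1 needs no segment or convexity argument, since Lipschitz continuity on $\mathcal D_{R,Y}$ applies directly to the pair $\mathbf 1_N\otimes\mathbf x_k^\ell,\ \mathbf y_k^\ell\in\mathcal D_{R,Y}$.
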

\begin{proof}
See proof in Appendix~\ref{collary1}
\end{proof}

lemma~\ref{cor:partial_inner_boundedness} guarantees that all gradients
evaluated along the partial-information RR inner loop are uniformly bounded. Based
on this property, we next derive an epoch-level recursion for the consensus error.

\begin{lemma} \label{lem:partial_consensus} Suppose Assumptions~1--\ref{ass:communication} hold, and the conditions of Lemma~\ref{cor:partial_inner_boundedness} are satisfied. Assume that the consensus stepsize $\{w_k\}$ is chosen such that 
\begin{equation*} 
\|I-w_kH\|\le q<1,\qquad \forall k . 
\end{equation*} 
Let 
\begin{equation*} 
s:=\frac{1+q^2}{2},\quad c_Y:=\frac{1+q^2}{1-q^2}NM_Y^2,\quad S_m:=\frac{1-s^m}{1-s}. 
\end{equation*} Then, for all \(k\), 
\begin{equation} 
\sum_{\ell=0}^{m-1}\|\bar{\mathbf y}_k^\ell\|^2 \le S_m\|\bar{\mathbf y}_k\|^2 + c_Y\frac{m-S_m}{1-s}\alpha_k^2 . \label{eq:inner_consensus_sum} 
\end{equation} 
Moreover, with 
\begin{equation*} 
r_Y:=\frac{1+s^m}{2},\qquad D_Y:= \frac{1+s^m}{2s^m}c_YS_m + \frac{1+s^m}{1-s^m}Nm^2M_Y^2, 
\end{equation*} 
the epoch-level consensus error satisfies 
\begin{equation} 
B_{k+1}\le r_YB_k+D_Y\alpha_k^2, \qquad B_k:=\mathbb E\|\bar{\mathbf y}_k\|^2 . \label{eq:epoch_consensus_recursion} 
\end{equation} 
Consequently, if \(\alpha_k\equiv\alpha\), then 
\begin{equation} 
B_k\le r_Y^kB_0+\frac{D_Y}{1-r_Y}\alpha^2 . \label{eq:constant_consensus_bound} 
\end{equation} 
\end{lemma}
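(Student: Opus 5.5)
The plan is to obtain a one-step contraction for the inner disagreement recursion \eqref{eq:partial_disagreement_inner} and unroll it within the epoch. This gives \eqref{eq:inner_consensus_sum}. The epoch-end projection is then handled as a bounded perturbation, giving \eqref{eq:epoch_consensus_recursion}. All bounds will be pathwise (deterministic), so expectations are taken only at the very end.

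\emph{Inner one-step bound.} Since $\mathbf y_k^0=\mathbf y_k$ and $\mathbf x_k^0=\mathbf x_k$, we have $\bar{\mathbf y}_k^0=\bar{\mathbf y}_k$. By Lemma~\ref{cor:partial_inner_boundedness}, every $\mathbf y_k^\ell$ lies in $\mathcal D_{R,Y}$. Combining the Lipschitz bound with $\|\widehat G_{\pi_\ell}(\mathbf 1_N\otimes x)\|\le G_R$ at the nearest consensus point yields $\|\widehat G_{\pi_\ell}(\mathbf y_k^\ell)\|\le G_R+LY=M_Y$. Hence $\|\mathbf 1_N\otimes \widehat G_{\pi_\ell}(\mathbf y_k^\ell)\|\le \sqrt N M_Y$. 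Applying $\|a+b\|^2\le(1+\eta)\|a\|^2+(1+\eta^{-1})\|b\|^2$ to \eqref{eq:partial_disagreement_inner} with $\|I-w_kH\|\le q$, I choose $\eta=(1-q^2)/(2q^2)$. Then $(1+\eta)q^2=s$ and $1+\eta^{-1}=(1+q^2)/(1-q^2)$, so
\[
\|\bar{\mathbf y}_k^{\ell+1}\|^2\le s\|\bar{\mathbf y}_k^\ell\|^2+c_Y\alpha_k^2 .
\]
If $q=0$, the same bound holds trivially. Unrolling gives $\|\bar{\mathbf y}_k^\ell\|^2\le s^\ell\|\bar{\mathbf y}_k\|^2+c_Y\alpha_k^2\frac{1-s^\ell}{1-s}$. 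Summing over $\ell=0,\dots,m-1$ and using $\sum_{\ell<m}s^\ell=S_m$ and $\sum_{\ell<m}(1-s^\ell)=m-S_m$ yields \eqref{eq:inner_consensus_sum}.

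\emph{Epoch-end step.} Because $\mathbf y_{k+1}=\mathbf y_k^m$ and $\mathbf x_{k+1}=\mathbb P_\Omega[\mathbf x_k^m]$,
\[
\bar{\mathbf y}_{k+1}=\bar{\mathbf y}_k^m+\mathbf 1_N\otimes(\mathbf x_k^m-\mathbb P_\Omega[\mathbf x_k^m]).
\]
This projection residual is the step I expect to need the most care: we must show it is $O(\alpha_k)$. Since $\mathbf x_k\in\Omega$ (it is itself a projection, or the feasible initial point), the projection residual is at most the distance from $\mathbf x_k^m$ to any point of $\Omega$. Taking that point to be $\mathbf x_k$ gives $\|\mathbf x_k^m-\mathbb P_\Omega[\mathbf x_k^m]\|\le\|\mathbf x_k^m-\mathbf x_k\|\le\alpha_k mM_Y$ by Lemma~\ref{cor:partial_inner_boundedness}. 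So the perturbation has norm at most $\sqrt N\,m M_Y\alpha_k$.

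Applying Young's inequality again, with $\eta'=(1-s^m)/(2s^m)$, gives $(1+\eta')s^m=r_Y$, $1+\eta'=\frac{1+s^m}{2s^m}$ and $1+\eta'^{-1}=\frac{1+s^m}{1-s^m}$. Using the unrolled bound at $\ell=m$, namely $\|\bar{\mathbf y}_k^m\|^2\le s^m\|\bar{\mathbf y}_k\|^2+c_YS_m\alpha_k^2$, this gives
\[
\|\bar{\mathbf y}_{k+1}\|^2\le r_Y\|\bar{\mathbf y}_k\|^2+\Big(\tfrac{1+s^m}{2s^m}c_YS_m+\tfrac{1+s^m}{1-s^m}Nm^2M_Y^2\Big)\alpha_k^2 .
\]
This is exactly $r_Y\|\bar{\mathbf y}_k\|^2+D_Y\alpha_k^2$. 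Taking expectations over the permutations yields \eqref{eq:epoch_consensus_recursion}.

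Finally, for $\alpha_k\equiv\alpha$, iterating the recursion gives $B_k\le r_Y^kB_0+D_Y\alpha^2\sum_{j<k}r_Y^j$. Since $r_Y<1$ (because $s<1$), this geometric sum is at most $1/(1-r_Y)$, which gives \eqref{eq:constant_consensus_bound}.
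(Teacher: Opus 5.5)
Your proof is correct and follows the paper's argument step for step. It uses the same Young's-inequality parameters, $(1-q^2)/(2q^2)$ for the inner contraction and $(1-s^m)/(2s^m)$ at the epoch boundary, and the same projection-residual bound $\|\mathbf x_k^m-\mathbb P_\Omega[\mathbf x_k^m]\|\le\|\mathbf x_k^m-\mathbf x_k\|\le\alpha_k mM_Y$ from Lemma~\ref{cor:partial_inner_boundedness}, while also making explicit the summation giving \eqref{eq:inner_consensus_sum} and the degenerate case $q=0$, both of which the paper leaves implicit.
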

\begin{proof}
See proof in Appendix~\ref{Lemma2}.
\end{proof}
Lemma~\ref{lem:partial_consensus} is the key estimate that connects the
communication dynamics with the RR inner loop. It shows that the local estimates
reach an $O(\alpha^2)$ mean-square consensus neighborhood under constant
stepsizes. We now combine this consensus recursion with the decision-error
recursion.

\subsection{Constant-Stepsize Convergence}

\begin{theorem}
\label{thm:partial_constant}
Suppose Assumptions~1--\ref{ass:communication} hold.
Let the local regularity and boundedness conditions in
Lemma~\ref{cor:partial_inner_boundedness} and the conditions of
Lemma~\ref{lem:partial_consensus} be satisfied. Consider
Algorithm~\ref{partialinformation} with constant stepsize $\alpha_k\equiv\alpha$.
Assume that $0<\alpha\le \bar\alpha_{\mathrm p}$, where
$
\bar\alpha_{\mathrm p}
:=
\min
\left\{
\bar\alpha_Y,
\frac{\mu}{mL^2},
\frac{1}{2m\mu}
\right\}$.
Define $\rho_X:=1-\frac{m\mu\alpha}{2}$ and $\bar\rho:=\max\{\rho_X,r_Y\}$. Then \(0<\rho_X<1\) and \(0<\bar\rho<1\). Moreover, there exist positive
constants \(C_1\) and \(C_2\), depending only on the problem and network
parameters but independent of \(K\) and \(\alpha\), such that
\[
\mathbb E\|\mathbf x_K-\mathbf x_\star\|^2
\le
\rho_X^K
\mathbb E\|\mathbf x_0-\mathbf x_\star\|^2
+
C_1\alpha^2
+
C_2\alpha K\bar\rho^{K-1}.
\]
Consequently,
\[
\limsup_{K\to\infty}
\mathbb E\|\mathbf x_K-\mathbf x_\star\|^2
\le
C_1\alpha^2.
\]
In addition, the consensus error satisfies
\[
\mathbb E\|\bar{\mathbf y}_K\|^2
\le
r_Y^K\mathbb E\|\bar{\mathbf y}_0\|^2
+
\frac{D_Y}{1-r_Y}\alpha^2.
\]
Thus, the decision and consensus errors converge to \(O(\alpha^2)\)
mean-square neighborhoods.
\begin{proof}
See proof in Appendix~\ref{Theorem1}.
\end{proof}
\end{theorem}
\subsection{Diminishing-Stepsize Convergence}

We next show that exact convergence can be recovered by using a diminishing
stepsize. The proof combines the decision-error recursion with the
consensus-error recursion and then applies a deterministic
Robbins--Siegmund-type argument to the mean-square decision error.

\begin{lemma}
\label{lem:det_rs}
Let \(\{v_k\}\), \(\{b_k\}\), and \(\{c_k\}\) be nonnegative deterministic
sequences satisfying
\[
v_{k+1}\le v_k-b_k+c_k,
\qquad
\forall k\ge0.
\]
If $\sum_{k=0}^{\infty}c_k<\infty$, then \(\{v_k\}\) converges to a finite limit and $\sum_{k=0}^{\infty}b_k<\infty$.

\end{lemma}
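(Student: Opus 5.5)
The statement to prove is Lemma~\ref{lem:det_rs}, a deterministic Robbins--Siegmund-type result. The plan is to telescope the recursion and use nonnegativity.

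First, sum the inequality $v_{j+1}\le v_j-b_j+c_j$ over $j=0,\dots,k-1$. This gives
\[
v_k+\sum_{j=0}^{k-1}b_j\le v_0+\sum_{j=0}^{k-1}c_j\le v_0+\sum_{j=0}^{\infty}c_j=:C<\infty .
\]
Since $v_k\ge0$, the partial sums $\sum_{j=0}^{k-1}b_j$ are bounded by $C$. They are also nondecreasing, because $b_j\ge0$. Hence $\sum_{k=0}^\infty b_k$ converges, which is the second claim. The same inequality, together with $b_j\ge0$, also shows that $\{v_k\}$ is bounded: $0\le v_k\le C$.

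For the convergence of $\{v_k\}$, introduce the auxiliary sequence
\[
u_k:=v_k+\sum_{j=k}^{\infty}c_j .
\]
It is well defined and nonnegative, since the tail sums are finite. The recursion then gives
\[
u_{k+1}=v_{k+1}+\sum_{j=k+1}^\infty c_j\le v_k-b_k+c_k+\sum_{j=k+1}^\infty c_j=u_k-b_k\le u_k .
\]
So $\{u_k\}$ is nonincreasing and bounded below by $0$, and therefore converges to some finite $u_\infty\ge0$. Because $\sum_{j\ge k}c_j\to0$ as the tail of a convergent series, we get $v_k=u_k-\sum_{j\ge k}c_j\to u_\infty$. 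Thus $\{v_k\}$ has a finite limit.

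The main point to get right is choosing a monotone auxiliary sequence, since $v_k$ itself need not be monotone. Adding the tail of $\sum c_j$ resolves this, and the rest is elementary. No earlier results of the paper are needed.
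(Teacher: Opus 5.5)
Your proof is correct and complete. The paper gives no proof of this lemma: it invokes it as a standard deterministic Robbins--Siegmund fact, and the appendices prove only Lemmas 1--2 and Theorems 1--2. Your argument, telescoping plus the monotone auxiliary sequence $u_k=v_k+\sum_{j\ge k}c_j$, is the standard proof and fills that omission.

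One small simplification: $u_{k+1}\le u_k-b_k$ already gives $\sum_{k}b_k\le u_0-u_\infty<\infty$. So the separate telescoping step for summability of $\{b_k\}$ is redundant, though harmless.
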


\begin{theorem}
\label{thm:partial_diminishing}
Suppose Assumptions~1--\ref{ass:communication} hold.
Let the local regularity and boundedness conditions in
Lemma~\ref{cor:partial_inner_boundedness} and the conditions of
Lemma~\ref{lem:partial_consensus} be satisfied. Consider
Algorithm~\ref{partialinformation} with stepsizes \(\{\alpha_k\}\) satisfying
\[
0<\alpha_k\le \bar\alpha_{\mathrm d},
\qquad
\sum_{k=0}^{\infty}\alpha_k=\infty,
\qquad
\sum_{k=0}^{\infty}\alpha_k^2<\infty,
\]
where
\[
\bar\alpha_{\mathrm d}
:=
\min
\left\{
\bar\alpha_Y,
\frac{\mu}{mL^2},
\frac{1}{2m\mu}
\right\}.
\]
Then
\[
\lim_{k\to\infty}
\mathbb E\|\mathbf x_k-\mathbf x_\star\|^2=0,
\qquad
\lim_{k\to\infty}
\mathbb E\|\bar{\mathbf y}_k\|^2=0.
\]
In particular, \(\mathbf x_k\) converges to \(\mathbf x_\star\) in mean square.
\end{theorem}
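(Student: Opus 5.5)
The plan is to reuse the one-epoch decision-error inequality that underlies Theorem~\ref{thm:partial_constant}, now with a time-varying stepsize $\alpha_k$, and to combine it with the consensus recursion \eqref{eq:epoch_consensus_recursion} of Lemma~\ref{lem:partial_consensus}. For the decision error, set $X_k:=\mathbb E\|\mathbf x_k-\mathbf x_\star\|^2$. Since $\mathbf x_\star$ is a fixed point of the projected map, $\mathbf x_\star=\mathbb P_\Omega[\mathbf x_\star-m\alpha_k\nabla F(\mathbf x_\star)]$, nonexpansiveness of the projection lets us compare $\mathbf x_k^m$ with $\mathbf x_\star-m\alpha_k\nabla F(\mathbf x_\star)$. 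Write
\[
\mathbf x_k^m=\mathbf x_k-m\alpha_k\nabla F(\mathbf x_k)-\alpha_k e_k,
\qquad
e_k:=\sum_{\ell=0}^{m-1}\bigl(\widehat G_{\pi_\ell}(\mathbf y_k^\ell)-\nabla F_{\pi_\ell}(\mathbf x_k)\bigr),
\]
using $\sum_\ell\nabla F_{\pi_\ell}=m\nabla F$. By the consistency relation and the Lipschitz bounds in Lemma~\ref{cor:partial_inner_boundedness}, each summand is bounded by $L\|\bar{\mathbf y}_k^\ell\|+L\|\mathbf x_k^\ell-\mathbf x_k\|\le L\|\bar{\mathbf y}_k^\ell\|+L\alpha_k\ell M_Y$. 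This splits $e_k$ into a reshuffling drift part of size $O(\alpha_k)$ and a disagreement part controlled through \eqref{eq:inner_consensus_sum}.

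Next I would expand the square, using strong monotonicity and Lipschitzness of $\nabla F$: under $\alpha_k\le\mu/(mL^2)$ one has $\|\mathbf x_k-\mathbf x_\star-m\alpha_k(\nabla F(\mathbf x_k)-\nabla F(\mathbf x_\star))\|^2\le(1-m\mu\alpha_k)\|\mathbf x_k-\mathbf x_\star\|^2$. The cross term with $\alpha_k e_k$ is handled by Young's inequality with weight $m\mu\alpha_k/2$. Taking expectations then yields, for constants $a,b,c$ independent of $k$,
\[
X_{k+1}\le\Bigl(1-\tfrac{m\mu\alpha_k}{2}\Bigr)X_k+a\,\alpha_k\sum_{\ell}\mathbb E\|\bar{\mathbf y}_k^\ell\|^2+b\,\alpha_k^2\sum_\ell\mathbb E\|\bar{\mathbf y}_k^\ell\|^2+c\,\alpha_k^3 .
\]
Inserting \eqref{eq:inner_consensus_sum} gives $X_{k+1}\le(1-\tfrac{m\mu\alpha_k}{2})X_k+a'\alpha_kB_k+c'\alpha_k^3$, where we use that $\alpha_k$ is bounded. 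This is the same inequality as in the proof of Theorem~\ref{thm:partial_constant}, but with $\alpha_k$ in place of $\alpha$.

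The consensus part comes straight from \eqref{eq:epoch_consensus_recursion}. Since $r_Y<1$ is fixed and $D_Y\alpha_k^2\to0$, the standard lemma for $u_{k+1}\le ru_k+\epsilon_k$ with $\epsilon_k\to0$ gives $B_k\to0$. More usefully, unrolling the recursion gives
\[
\sum_k\alpha_kB_k\le\sum_k B_k\le\frac{B_0+D_Y\sum_k\alpha_k^2}{1-r_Y}<\infty,
\]
which proves the second claim.

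Finally, for the decision error I would apply Lemma~\ref{lem:det_rs} with $v_k=X_k$, $b_k=\tfrac{m\mu}{2}\alpha_kX_k$ and $c_k=a'\alpha_kB_k+c'\alpha_k^3$. The sum $\sum c_k$ is finite because $\sum\alpha_kB_k<\infty$ and $\alpha_k^3\le\bar\alpha_{\mathrm d}\alpha_k^2$. The lemma then says $X_k$ converges to some limit $X^\ast$ and $\sum\alpha_kX_k<\infty$. Because $\sum\alpha_k=\infty$, we get $\liminf X_k=0$, and hence $X^\ast=0$. The main obstacle is making the cross term $\alpha_k\langle\cdot,e_k\rangle$ produce a drift term of order $\alpha_k^3$ rather than $\alpha_k^2$. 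Young's inequality with weight proportional to $\mu\alpha_k$ gives $\alpha_k\|e_k\|^2/\mu$, and the drift part of $e_k$ is $O(\alpha_k)$, so its squared contribution is $O(\alpha_k^3)$, which is summable. The disagreement part enters only as $\alpha_kB_k$, which is summable by the consensus step. I would make sure that the constants from Theorem~\ref{thm:partial_constant}'s derivation are indeed uniform in $k$ under $\alpha_k\le\bar\alpha_{\mathrm d}$, which holds because every bound in Lemma~\ref{cor:partial_inner_boundedness} depends on $\alpha_k$ only through the condition $\alpha_k\le\bar\alpha_Y$.
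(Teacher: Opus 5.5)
Your proposal is correct and follows the paper's proof. You obtain the same one-epoch recursion $A_{k+1}\le(1-\tfrac{m\mu}{2}\alpha_k)A_k+d_e\alpha_k^3+d_b\alpha_kB_k$ via Young's inequality with weight proportional to $\mu\alpha_k$, then combine it with Lemma~\ref{lem:partial_consensus}, apply Lemma~\ref{lem:det_rs}, and use $\sum_k\alpha_k=\infty$ to force the limit to zero. Summing the consensus recursion to get $\sum_kB_k<\infty$ is a slightly cleaner substitute for the paper's $\varepsilon$-splitting and exchange of summation, but note that $\sum_k\alpha_kB_k\le\sum_kB_k$ requires $\alpha_k\le1$, which is not assumed; use $\sum_k\alpha_kB_k\le\bar\alpha_{\mathrm d}\sum_kB_k$ instead.
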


\begin{proof}
See Appendix~\ref{Theorem2}.
\end{proof}

\section{Simulation}

We evaluate the proposed RR based stochastic pseudo-gradient method
on a scenario-based distributed Nash game under partial-decision information.
The compared methods are the proposed player-wise RR method, with-replacement
SGD, deterministic cyclic ordering, fixed reshuffling, common-permutation RR,
and mini-batch SGD.

All methods use the same communication graph, estimate-consensus recursion,
projection rule, initialization, and stopping horizon. They differ only in the
sampling rule used to select local component pseudo-gradients. In
with-replacement SGD, each player independently samples a component index with
replacement at each inner iteration. In the proposed RR method, each player
independently draws a fresh random permutation at every epoch and visits every
local component exactly once. The cyclic method uses the fixed order
\(1,\ldots,m\) at every epoch, while fixed reshuffling uses one random
permutation fixed across all epochs. Common-permutation RR uses one fresh
permutation shared by all players.

We report two metrics. The first is the squared decision error
\[
\|\mathbf x_k-\mathbf x^\star\|^2,
\]
where \(\mathbf x^\star\) is a high-accuracy reference Nash equilibrium. The
second is the estimate-consensus error
\[
\|\bar{\mathbf y}_k\|^2
=
\left\|
\mathbf y_k-\mathbf 1_N\otimes \mathbf x_k
\right\|^2,
\]
which measures the disagreement between the players' local estimates and the
current joint action profile under partial-decision information.

The curves are averaged over \(30\) independent runs, and shaded regions
represent \(95\%\) confidence intervals. Terminal performance is reported as
the average over the last \(20\%\) of the recorded iterates.

\subsubsection{Scenario-Based EV Charging Game}

We consider a multi-period EV charging game with \(N=20\) regional charging
aggregators and \(T=24\) time slots. Player \(i\) chooses a charging profile
\(x_i\in\mathbb R^{T}\) subject to
\[
\Omega_i
=
\left\{
x_i\in\mathbb R^T:
0\le x_{i,t}\le \bar p_i,\;
\sum_{t=1}^{T}x_{i,t}=E_i
\right\},
\]
where \(E_i\in[10,20]\) is the required energy and
\(\bar p_i\in[4.5,7.5]\) is the charging-power limit. The finite-sum objective
of player \(i\) is $ f_i(\mathbf x) = \frac{1}{m}\sum_{\ell=1}^{m} f_i(\mathbf x;\ell)$
with $m=128$.

For scenario \(\ell\), the component cost is
\begin{align*}
   &f_i(\mathbf x;\ell)\\
=&
\frac{a_i}{2}\|x_i-p_i\|^2
+
\lambda_\ell^\top x_i
+
\frac{\rho_\ell}{2}
\left\|
\sum_{j=1}^{N}x_j+b_\ell-c_\ell
\right\|^2
+
\frac{\tau}{2}\|x_i\|^2 . 
\end{align*}
Here, \(p_i\) is the preferred charging profile,
\(a_i\in[0.4,0.8]\), \(\tau=0.02\), \(\lambda_\ell\) is the scenario price,
\(b_\ell\) is the non-EV base load, \(c_\ell\) is the feeder capacity, and
\(\rho_\ell\) is the congestion coefficient generated around \(0.015\).

The component pseudo-gradient is $\nabla_{x_i} f_i(\mathbf x;\ell)
=
a_i(x_i-p_i)
+
\lambda_\ell
+
\rho_\ell
\left(
\sum_{j=1}^{N}x_j+b_\ell-c_\ell
\right)
+
\tau x_i $.
Under partial-decision information, player \(i\) evaluates this pseudo-gradient
using its local estimate of the joint action, rather than the true global
action profile. Players exchange estimate vectors only with their communication
neighbors.\par

The communication graph is a ring graph with \(20\) edges and degree \(2\).
Metropolis weights are used, yielding the disagreement contraction factor
\(q=0.9674\). The reference Nash equilibrium is computed offline by a
high-accuracy projected pseudo-gradient method, with numerical residual
\(1.31\times 10^{-20}\). All algorithms are run for \(1500\) epochs.\par
To isolate the effect of the reshuffling rule, we compare the proposed
Algorithm~\ref{partialinformation} with several sampling-rule variants under
the same partial-information communication and projection framework. The
proposed player-wise RR is exactly Algorithm~\ref{partialinformation}: each
player independently generates its own random permutation at every epoch.
Common RR keeps the same update structure but forces all players to use a
common epoch-wise permutation. Fixed RR also follows Algorithm~\ref{partialinformation}
but reuses a fixed random permutation across epochs. The cyclic incremental
method replaces the random permutation by the deterministic order
\(1,\ldots,m\). The with-replacement SGD and mini-batch SGD baselines replace
the without-replacement pass by independent sampling with replacement. All
methods therefore share the same estimate-consensus recursion, epoch-end
projection, initialization, stepsize, and communication graph; they differ only
in how local component pseudo-gradients are selected within each epoch.\par
\begin{figure}[t]
    \centering
    \includegraphics[width=1.0\linewidth]{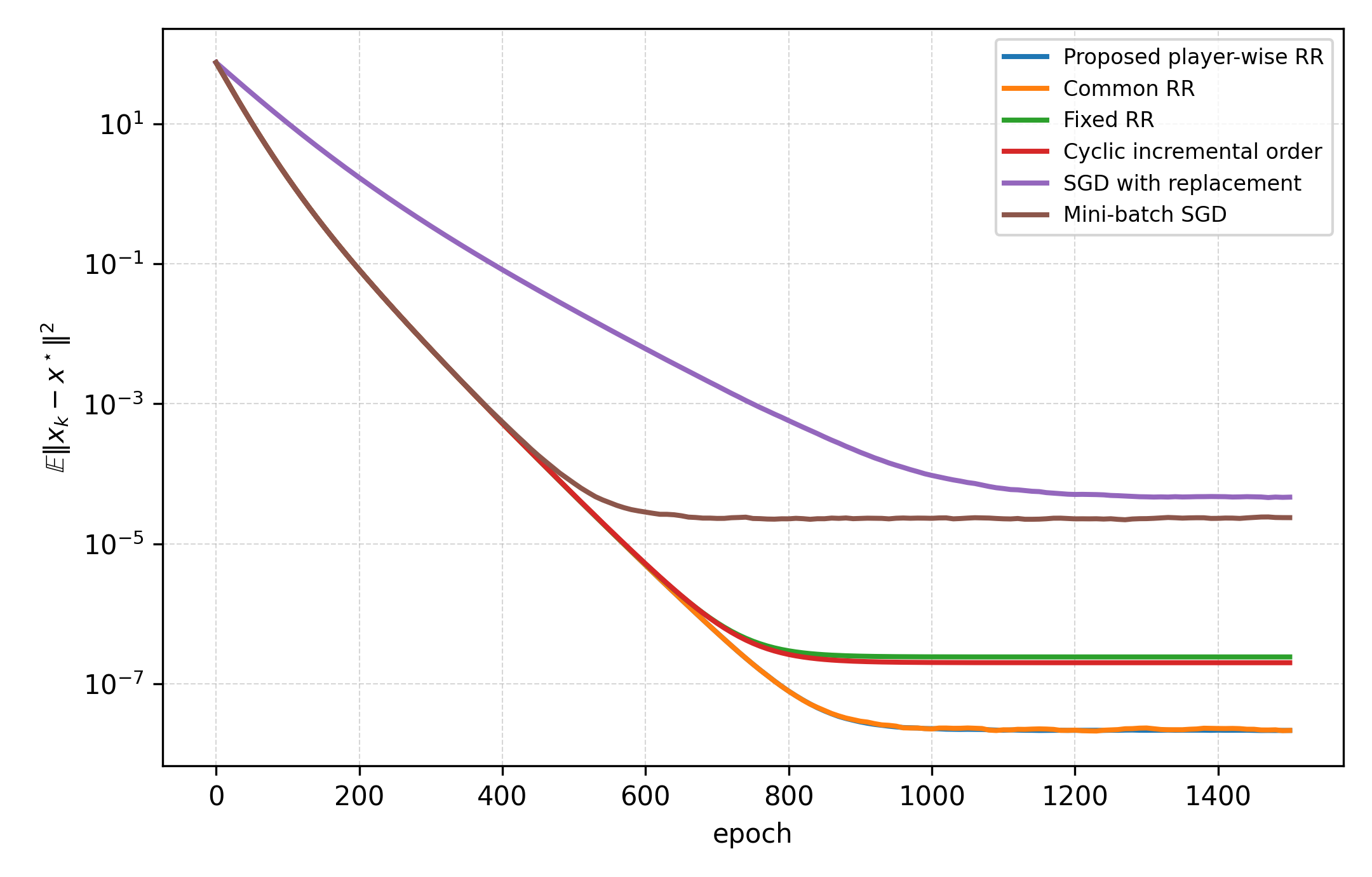}
    \includegraphics[width=1.0\linewidth]{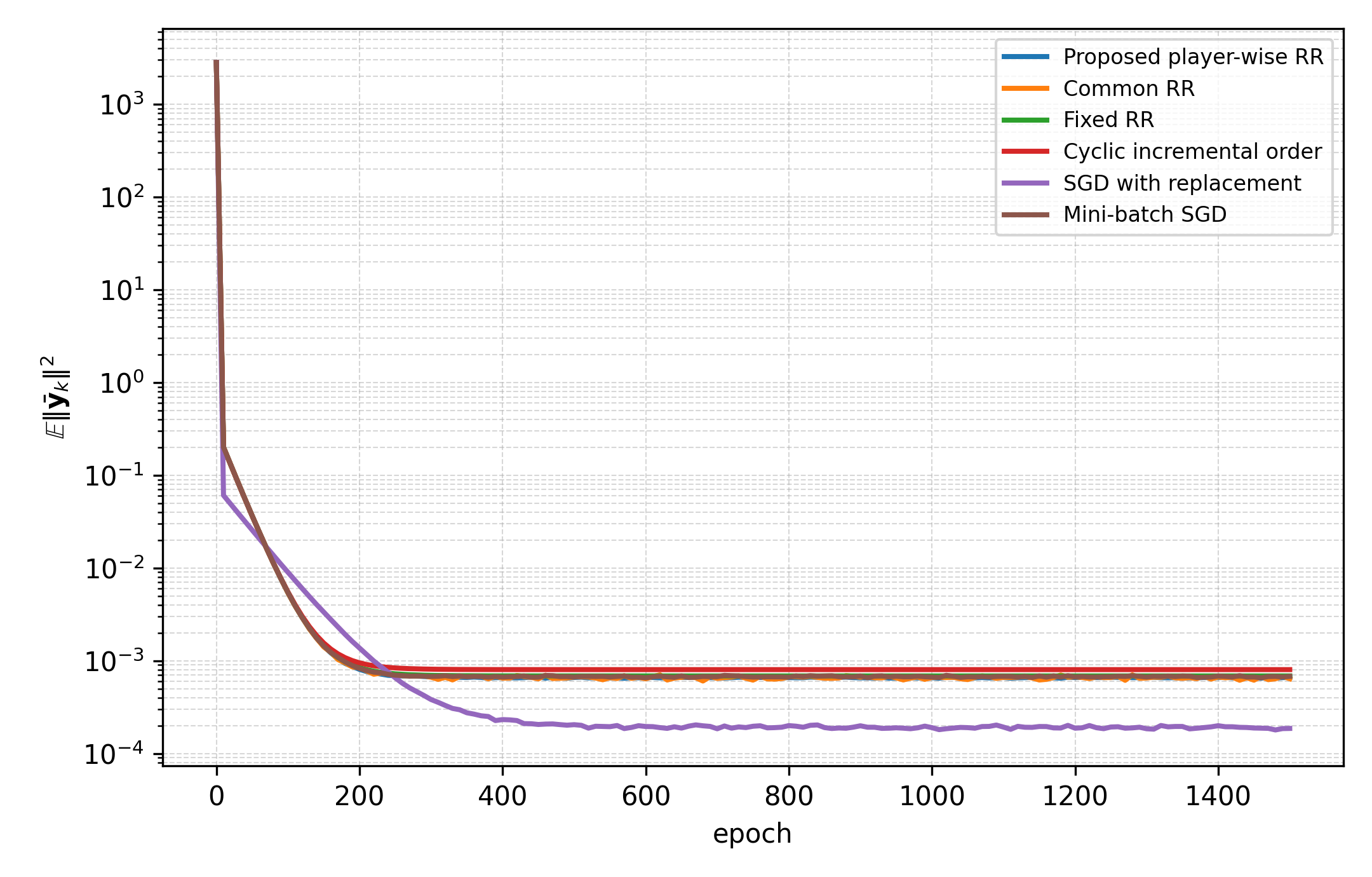}
    \caption{Convergence behavior on the scenario-based EV charging game.
    Above: squared decision error for the core methods. Below:
    estimate-consensus error under partial-decision information. The proposed
    RR method attains substantially smaller decision error than
    with-replacement SGD, while the local-estimate disagreement remains
    bounded.}
    \label{fig:ev_game}
\end{figure}

Fig.~\ref{fig:ev_game} shows that the proposed player-wise RR method converges
to a substantially smaller decision-error neighborhood than with-replacement
SGD. The cyclic-order baseline is included as a deterministic
without-replacement method; its performance shows that the gain of RR is not
only due to using each component once per epoch, but also to drawing a fresh
random permutation.

The consensus-error curve verifies the stability of the distributed
partial-information estimate recursion. This metric measures disagreement
among local estimates and is not, by itself, an equilibrium-accuracy metric.
Thus, a smaller consensus error does not necessarily imply a smaller decision
error.

Over \(30\) independent runs, the proposed RR method achieves a tail squared
decision error of \(2.165\times10^{-8}\), compared with
\(4.757\times10^{-5}\) for with-replacement SGD. The tail consensus errors of RR and
with-replacement SGD are \(6.588\times10^{-4}\) and
\(1.905\times10^{-4}\), respectively, indicating that both methods keep the
local-estimate disagreement bounded under the same communication protocol.

\section{Conclusion}
This paper investigated random-reshuffling-based Nash equilibrium seeking for noncooperative games. We considered partial-decision-information
case. By using a VI-based epoch-wise drift decomposition, we characterized the
RR-induced inner-trajectory error and its coupling with local-estimate
disagreement. Under constant stepsizes, the proposed methods converge linearly
to an \(O(\alpha^2)\) mean-square neighborhood of the Nash equilibrium, while
under diminishing stepsizes exact mean-square convergence is obtained. Numerical
results show that RR consistently improves terminal accuracy over with-replacement
SGD under comparable oracle complexity. Future work will focus on sharper
permutation-dependent bounds, relaxing strong monotonicity, and extending the
framework to broader classes of stochastic games.
\appendix
\section*{Appendix}

\section{Proof of Lemma 1}\label{collary1}
\begin{proof}
Let \(g_k^\ell:=\widehat G_{\pi_\ell}(\mathbf y_k^\ell)\). We prove the result
by induction over epochs:\par
Since the algorithm performs projection at the end of
each epoch, \(\mathbf x_k\in\Omega\) for all \(k\ge0\), and hence
\(\mathbf x_k^0=\mathbf x_k\in\Omega\subseteq\Omega_R\). Suppose that \(\|\bar{\mathbf y}_k\|\le Y\) at the
beginning of epoch \(k\). We show that all inner
iterates in epoch \(k\) remain in the desired bounded region and that the same
condition also holds at the beginning of epoch \(k+1\).

For \(\ell=0\), \(\mathbf x_k^0=\mathbf x_k\in\Omega_R\) and
\(\|\bar{\mathbf y}_k^0\|=\|\bar{\mathbf y}_k\|\le Y\), so
\(\mathbf y_k^0\in\mathcal D_{R,Y}\). More generally, if
\(\mathbf x_k^\ell\in\Omega_R\) and \(\|\bar{\mathbf y}_k^\ell\|\le Y\), then
\(\mathbf y_k^\ell\in\mathcal D_{R,Y}\). By the consistency condition in
Lemma~\ref{cor:partial_inner_boundedness} and
\(\widehat G_{\pi_\ell}(\mathbf 1_N\otimes\mathbf x_k^\ell)
=\nabla F_{\pi_\ell}(\mathbf x_k^\ell)\) we have
\begin{align*}
\|g_k^\ell\|
&=\|\widehat G_{\pi_\ell}(\mathbf y_k^\ell)\|\\
&\le \|\widehat G_{\pi_\ell}(\mathbf 1_N\otimes\mathbf x_k^\ell)\|
+\|\widehat G_{\pi_\ell}(\mathbf y_k^\ell)
-\widehat G_{\pi_\ell}(\mathbf 1_N\otimes\mathbf x_k^\ell)\|\\
&\le G_R+L\|\mathbf y_k^\ell-\mathbf 1_N\otimes\mathbf x_k^\ell\|
\le G_R+LY=M_Y.
\end{align*}
Thus,
\(\|\mathbf x_k^{\ell+1}-\mathbf x_k^\ell\|
=\alpha_k\|g_k^\ell\|\le\alpha_kM_Y\), and, for
\(\ell=0,1,\ldots,m\),
\begin{align*}
\|\mathbf x_k^\ell-\mathbf x_k\|
&=\left\|\sum_{p=0}^{\ell-1}
(\mathbf x_k^{p+1}-\mathbf x_k^p)\right\|
\le\sum_{p=0}^{\ell-1}\|\mathbf x_k^{p+1}-\mathbf x_k^p\|\\
&\le\alpha_k\ell M_Y.
\end{align*}
Since \(\ell\le m\) and \(\mathbf x_k\in\Omega\),
\[
\operatorname{dist}(\mathbf x_k^\ell,\Omega)
\le\|\mathbf x_k^\ell-\mathbf x_k\|
\le\alpha_kmM_Y.
\]
If \(\alpha_k\le R/(mM_Y)\), then
\(\operatorname{dist}(\mathbf x_k^\ell,\Omega)\le R\), and hence
\(\mathbf x_k^\ell\in\Omega_R\).

Next, we prove that the consensus error remains bounded by \(Y\) during the
inner loop. From
\(\mathbf y_k^{\ell+1}=\mathbf y_k^\ell-w_kH\bar{\mathbf y}_k^\ell\) and
\(\mathbf x_k^{\ell+1}=\mathbf x_k^\ell-\alpha_kg_k^\ell\), we obtain
\begin{align*}
\bar{\mathbf y}_k^{\ell+1}
&=\mathbf y_k^{\ell+1}-\mathbf 1_N\otimes\mathbf x_k^{\ell+1}\\
&=(I-w_kH)\bar{\mathbf y}_k^\ell
+\alpha_k\mathbf 1_N\otimes g_k^\ell.
\end{align*}
Taking norms gives
\[
\|\bar{\mathbf y}_k^{\ell+1}\|
\le q\|\bar{\mathbf y}_k^\ell\|+\alpha_k\sqrt N M_Y.
\]
Unrolling this recursion and using
\(\|\bar{\mathbf y}_k^0\|=\|\bar{\mathbf y}_k\|\le Y\) yields
\[
\|\bar{\mathbf y}_k^\ell\|
\le q^\ell Y+\alpha_k\sqrt N M_Y\frac{1-q^\ell}{1-q}.
\]
If \(\alpha_k\le(1-q)Y/(\sqrt N M_Y)\), then
\[
\alpha_k\sqrt N M_Y\frac{1-q^\ell}{1-q}\le Y(1-q^\ell),
\]
and therefore \(\|\bar{\mathbf y}_k^\ell\|\le Y\) for every
\(\ell=0,\ldots,m\).

It remains to show that the next epoch also starts with a consensus error not
larger than \(Y\). By the epoch-end update,
\(\mathbf y_{k+1}=\mathbf y_k^m\) and
\(\mathbf x_{k+1}=\mathbb P_\Omega[\mathbf x_k^m]\), so
\begin{align*}
\bar{\mathbf y}_{k+1}
&=\bar{\mathbf y}_k^m+
\mathbf 1_N\otimes
\bigl(\mathbf x_k^m-\mathbb P_\Omega[\mathbf x_k^m]\bigr).
\end{align*}
Thus,
\[
\|\bar{\mathbf y}_{k+1}\|
\le\|\bar{\mathbf y}_k^m\|
+\sqrt N\|\mathbf x_k^m-\mathbb P_\Omega[\mathbf x_k^m]\|.
\]
The inner-loop estimate with \(\ell=m\) and the inner-trajectory bound give
\begin{align*}
\|\bar{\mathbf y}_k^m\|
&\le q^mY+\alpha_k\sqrt N M_Y\frac{1-q^m}{1-q},
\end{align*}
and
\begin{align*}
\|\mathbf x_k^m-\mathbb P_\Omega[\mathbf x_k^m]\|
&=\operatorname{dist}(\mathbf x_k^m,\Omega)
\le\|\mathbf x_k^m-\mathbf x_k\|
\le\alpha_kmM_Y.
\end{align*}
Consequently,
\[
\|\bar{\mathbf y}_{k+1}\|
\le q^mY+\alpha_k\sqrt N M_Y
\left(\frac{1-q^m}{1-q}+m\right).
\]
If $\alpha_k\le
\frac{(1-q^m)Y}
{\sqrt N M_Y\left(\frac{1-q^m}{1-q}+m\right)}$,
then \(\|\bar{\mathbf y}_{k+1}\|\le q^mY+(1-q^m)Y=Y\).

Therefore, if \(0<\alpha_k\le\bar\alpha_Y\), where
\[
\bar\alpha_Y:=
\min\left\{
\frac{R}{mM_Y},
\frac{(1-q)Y}{\sqrt N M_Y},
\frac{(1-q^m)Y}
{\sqrt N M_Y\left(\frac{1-q^m}{1-q}+m\right)}
\right\},
\]
then
\(\|\bar{\mathbf y}_k\|\le Y\Rightarrow
\|\bar{\mathbf y}_{k+1}\|\le Y\).
Since \(\|\bar{\mathbf y}_0\|\le Y\), induction over \(k\) gives
\(\|\bar{\mathbf y}_k\|\le Y\) for all \(k\ge0\). The preceding estimates also
imply, for all \(k\ge0\) and \(\ell=0,\ldots,m\),
\[
\mathbf x_k^\ell\in\Omega_R,\qquad
\|\bar{\mathbf y}_k^\ell\|\le Y,\qquad
\|\mathbf x_k^\ell-\mathbf x_k\|\le\alpha_k\ell M_Y.
\]
This completes the proof.
\end{proof}

\section{Proof of Lemma 2}\label{Lemma2}
\begin{proof}
Define
\(\bar{\mathbf y}_k^\ell:=
\mathbf y_k^\ell-\mathbf 1_N\otimes\mathbf x_k^\ell\).
By \eqref{compact_formyk},
\(\mathbf y_k^{\ell+1}
=\mathbf y_k^\ell-w_kH\bar{\mathbf y}_k^\ell\), where \(w_k\) satisfies
\(\|I-w_kH\|\le q <1\). The inner update gives
\[
\bar{\mathbf y}_k^{\ell+1}
=(I-w_kH)\bar{\mathbf y}_k^\ell
+\alpha_k\mathbf 1_N\otimes
\widehat G_{\pi_\ell}(\mathbf y_k^\ell).
\]
Taking squared norms and using Young's inequality yields
\begin{align*}
\|\bar{\mathbf y}_k^{\ell+1}\|^2
&\le(1+\theta)q^2\|\bar{\mathbf y}_k^\ell\|^2
+\left(1+\frac1\theta\right)\alpha_k^2
\|\mathbf 1_N\otimes\widehat G_{\pi_\ell}(\mathbf y_k^\ell)\|^2\\
&\le(1+\theta)q^2\|\bar{\mathbf y}_k^\ell\|^2
+\left(1+\frac1\theta\right)\alpha_k^2NM_Y^2.
\end{align*}
Set \(s:=(1+\theta)q^2\) and
\(c_Y:=(1+1/\theta)NM_Y^2\). Choosing
\(\theta=(1-q^2)/(2q^2)\) gives \(s<1\). Unrolling across \(\ell\) gives
\begin{equation}\label{10}
\|\bar{\mathbf y}_k^\ell\|^2
\le s^\ell\|\bar{\mathbf y}_k^0\|^2
+c_Y\alpha_k^2\sum_{r=0}^{\ell-1}s^r.
\end{equation}

At the epoch boundary,
\begin{align*}
\bar{\mathbf y}_{k+1}
&=\mathbf y_k^m-\mathbf 1_N\otimes\mathbb P_\Omega[\mathbf x_k^m]\\
&=\bar{\mathbf y}_k^m+
\mathbf 1_N\otimes
\bigl(\mathbf x_k^m-\mathbb P_\Omega[\mathbf x_k^m]\bigr).
\end{align*}
Taking squared norms, using Young's inequality, the projection-distance bound,
and \eqref{10}, we obtain
\begin{align*}
\|\bar{\mathbf y}_{k+1}\|^2
&\le(1+\gamma)\|\bar{\mathbf y}_k^m\|^2
+\left(1+\frac1\gamma\right)N
\|\mathbf x_k^m-\mathbb P_\Omega[\mathbf x_k^m]\|^2\\
&\le(1+\gamma)s^m\|\bar{\mathbf y}_k^0\|^2
+(1+\gamma)c_Y\alpha_k^2\sum_{r=0}^{m-1}s^r\\
&+\left(1+\frac1\gamma\right)N\alpha_k^2m^2M_Y^2\\
&=r_Y\|\bar{\mathbf y}_k\|^2+D_Y\alpha_k^2,
\end{align*}
where
\[
r_Y:=(1+\gamma)s^m,\qquad
\]
\[
D_Y:=(1+\gamma)c_Y\sum_{r=0}^{m-1}s^r
+\left(1+\frac1\gamma\right)Nm^2M_Y^2.
\]
Choosing \(\gamma=(1-s^m)/(2s^m)\) gives
\(r_Y=(1+s^m)/2<1\). Taking expectations and defining
\(B_k:=\mathbb E\|\bar{\mathbf y}_k\|^2\), we obtain
\[
B_{k+1}\le r_YB_k+D_Y\alpha_k^2.
\]
If \(\alpha_k\equiv\alpha\), unrolling gives
\[
B_k\le r_Y^kB_0+D_Y\alpha^2\sum_{t=0}^{k-1}r_Y^t
\le r_Y^kB_0+\frac{D_Y}{1-r_Y}\alpha^2.
\]
\end{proof}

\section{Proof of Theorem 1}\label{Theorem1}
\begin{proof}
Based on \eqref{eq:partial_x_inner}, \eqref{compact_formyk} and the nonexpansiveness of projection, we have
\begin{align*}
   &\mathbb{E}[\|\mathbf{x}_{k+1}-\mathbf{x}_\star\|^2]\\
  =&\mathbb{E}[\|\mathbb{P}_{\Omega}[\mathbf{x}_{k}-\alpha_k\sum_{\ell=0}^{m-1}\widehat G_{\pi_\ell}(\mathbf y_k^\ell)]-\mathbb{P}_{\Omega}[\mathbf{x}_\star-m\alpha_k \nabla F(\mathbf{x}_\star)]\|^2]\\
\le&\mathbb{E}[\|\mathbf{x}_{k}-\mathbf{x}_\star-\alpha_k\sum_{\ell=0}^{m-1}\widehat G_{\pi_\ell}(\mathbf y_k^\ell)+m\alpha_k \nabla F(\mathbf{x}_\star)\|^2]\\
  =&\mathbb{E}[\|\mathbf{x}_{k}-\mathbf{x}_\star-m\alpha_k(\nabla F(\mathbf{x}_{k})-\nabla F(\mathbf{x}_\star))-\alpha_k (\xi_k+\delta_k)\|^2].
 \end{align*}
Here, we set  $\xi_k=\sum_{\ell=0}^{m-1} \nabla F_{\pi_\ell}(\mathbf{x}_{k}^\ell)-\nabla F_{\pi_\ell}(\mathbf{x}_{k})$ and $\delta_k= \sum_{\ell=0}^{m-1}[\widehat G_{\pi_\ell}(\mathbf y_k^\ell)-\nabla F_{\pi_\ell}(\mathbf{x}_{k}^\ell)]$. First equality holds by the variational-inequality characterization of the Nash equilibrium: $\mathbf x_\star
=
\mathbb P_\Omega[
\mathbf x_\star-m\alpha_k\nabla F(\mathbf x_\star)
]$.

Young's inequality and strong monotonicity and \(L\)-smoothness of
\(\nabla F\) yield
\begin{align*}
&\mathbb E\|\mathbf x_{k+1}-\mathbf x_\star\|^2\\
\le&(1+\eta)(1-2m\mu\alpha_k+m^2L^2\alpha_k^2)
\mathbb E\|\mathbf x_k-\mathbf x_\star\|^2\\
&\quad+\left(1+\frac1\eta\right)
\alpha_k^2\mathbb E\|\xi_k+\delta_k\|^2.
\end{align*}

By Lemma~\ref{cor:partial_inner_boundedness},
\[
\|\xi_k\|
\le\sum_{\ell=0}^{m-1}L\|\mathbf x_k^\ell-\mathbf x_k\|
\le\alpha_kLM_Y\frac{m(m-1)}2,
\]
and hence
\[
\mathbb E\|\xi_k\|^2
\le\alpha_k^2L^2M_Y^2\frac{m^2(m-1)^2}{4}.
\]
By Lipschitz continuity, Cauchy's inequality and \eqref{10},
\begin{align*}
\|\delta_k\|
&\le L\sum_{\ell=0}^{m-1}\|\bar{\mathbf y}_k^\ell\|,\\
\|\delta_k\|^2
&\le mL^2\sum_{\ell=0}^{m-1}\|\bar{\mathbf y}_k^\ell\|^2
\le mL^2\left(S_m\|\bar{\mathbf y}_k\|^2+c_YT_m\alpha_k^2\right),
\end{align*}
where
\[
S_m:=\sum_{\ell=0}^{m-1}s^\ell,\qquad
T_m:=\sum_{\ell=0}^{m-1}\sum_{r=0}^{\ell-1}s^r.
\]
Therefore,
\[
\mathbb E\|\xi_k+\delta_k\|^2
\le C_e\alpha_k^2+C_b\mathbb E\|\bar{\mathbf y}_k\|^2,
\]
where
\[
C_e:=\frac{L^2M_Y^2m^2(m-1)^2}{2}+2mL^2c_YT_m,
\qquad
C_b:=2mL^2S_m.
\]

Let
\(A_k:=\mathbb E\|\mathbf x_k-\mathbf x_\star\|^2\) and
\(B_k:=\mathbb E\|\bar{\mathbf y}_k\|^2\). Then
\begin{align*}
A_{k+1}
&\le(1+\eta)(1-2m\mu\alpha_k+m^2L^2\alpha_k^2)A_k\\
&\quad+\left(1+\frac1\eta\right)
\alpha_k^2(C_e\alpha_k^2+C_bB_k).
\end{align*}
For constant \(\alpha_k\equiv\alpha\), assume
\(0<\alpha\le\mu/(mL^2)\) and \(0<\alpha<1/(m\mu)\), and choose
\(\eta=m\mu\alpha/[2(1-m\mu\alpha)]\). Then
\[
(1+\eta)(1-m\mu\alpha)=1-\frac{m\mu\alpha}{2},
\qquad
1+\frac1\eta\le\frac{2}{m\mu\alpha}.
\]
Consequently,
\[
A_{k+1}
\le\rho_XA_k+\frac{2C_e}{m\mu}\alpha^3
+\frac{2C_b}{m\mu}\alpha B_k,
\qquad
\rho_X:=1-\frac{m\mu\alpha}{2}.
\]
By Lemma 2,
\[
B_k\le r_Y^kB_0+\frac{D_Y}{1-r_Y}\alpha^2.
\]
Substitution yields
\[
A_{k+1}\le\rho_XA_k+D_A\alpha^3+D_B\alpha r_Y^k,
\]
where
\[
D_A:=\frac{2}{m\mu}
\left(C_e+\frac{C_bD_Y}{1-r_Y}\right),\qquad
D_B:=\frac{2C_bB_0}{m\mu}.
\]
Unrolling over \(K\) epochs gives
\begin{align*}
A_K
&\le\rho_X^KA_0
+D_A\alpha^3\sum_{k=0}^{K-1}\rho_X^{K-1-k}
+D_B\alpha\sum_{k=0}^{K-1}\rho_X^{K-1-k}r_Y^k.
\end{align*}
Since
\[
\sum_{k=0}^{K-1}\rho_X^{K-1-k}
\le\frac{1}{1-\rho_X}=\frac{2}{m\mu\alpha},
\]
and, with \(\bar\rho:=\max\{\rho_X,r_Y\}\),
\[
\sum_{k=0}^{K-1}\rho_X^{K-1-k}r_Y^k
\le K\bar\rho^{K-1},
\]
we obtain
\[
A_K
\le\rho_X^KA_0+\frac{2D_A}{m\mu}\alpha^2
+D_B\alpha K\bar\rho^{K-1}.
\]
Equivalently,
\[
\mathbb E\|\mathbf x_K-\mathbf x_\star\|^2
\le\rho_X^K\mathbb E\|\mathbf x_0-\mathbf x_\star\|^2
+\frac{2D_A}{m\mu}\alpha^2
+D_B\alpha K\bar\rho^{K-1}.
\]
Since \(K\bar\rho^{K-1}\to0\),
\[
\limsup_{K\to\infty}\mathbb E\|\mathbf x_K-\mathbf x_\star\|^2
\le\frac{2D_A}{m\mu}\alpha^2.
\]
In particular,
\[
\mathbb E\|\mathbf x_K-\mathbf x_\star\|^2
=\mathcal O(\rho_X^K)+\mathcal O(K\bar\rho^{K-1})
+\mathcal O(\alpha^2).
\]
\end{proof}

\section{Proof of Theorem 2}\label{Theorem2} 
\begin{proof}
Define
\(A_k:=\mathbb E\|\mathbf x_k-\mathbf x_\star\|^2\) and
\(B_k:=\mathbb E\|\bar{\mathbf y}_k\|^2\). By the same argument as in
Theorem~\ref{thm:partial_constant}, with
\(\eta_k=m\mu\alpha_k/[2(1-m\mu\alpha_k)]\) and
\[
\alpha_k\le\bar\alpha_{\mathrm d}
\le\min\left\{\frac{\mu}{mL^2},\frac{1}{2m\mu}\right\},
\]
there exist constants \(c_x,d_e,d_b>0\), independent of \(k\) and
\(\alpha_k\), such that
\[
A_{k+1}\le(1-c_x\alpha_k)A_k+d_e\alpha_k^3+d_b\alpha_kB_k,
\qquad c_x:=\frac{m\mu}{2}.
\]
Moreover, Lemma~\ref{lem:partial_consensus} gives
\[
B_{k+1}\le r_YB_k+D_Y\alpha_k^2,\qquad0<r_Y<1.
\]

Unrolling the latter recursion gives
\[
B_k\le r_Y^kB_0+D_Y\sum_{t=0}^{k-1}r_Y^{k-1-t}\alpha_t^2.
\]
Clearly, \(r_Y^kB_0\to0\). Since
\(\sum_{k=0}^\infty\alpha_k^2<\infty\), we have \(\alpha_k^2\to0\).
For any \(\varepsilon>0\), choose \(K_0\) such that
\(\alpha_t^2\le\varepsilon\) for all \(t\ge K_0\). Then, for \(k>K_0\),
\begin{align*}
\sum_{t=0}^{k-1}r_Y^{k-1-t}\alpha_t^2
&=\sum_{t=0}^{K_0-1}r_Y^{k-1-t}\alpha_t^2
+\sum_{t=K_0}^{k-1}r_Y^{k-1-t}\alpha_t^2\\
&\le\sum_{t=0}^{K_0-1}r_Y^{k-1-t}\alpha_t^2
+\frac{\varepsilon}{1-r_Y}.
\end{align*}
The first term tends to zero, and \(\varepsilon\) is arbitrary; hence
\(B_k\to0\).

Next,
\begin{align*}
\sum_{k=0}^\infty\alpha_kB_k
&\le B_0\sum_{k=0}^\infty\alpha_kr_Y^k
+D_Y\sum_{k=0}^\infty\alpha_k
\sum_{t=0}^{k-1}r_Y^{k-1-t}\alpha_t^2.
\end{align*}
Since \(\alpha_k\le\bar\alpha_{\mathrm d}\),
\[
\sum_{k=0}^\infty\alpha_kr_Y^k
\le\frac{\bar\alpha_{\mathrm d}}{1-r_Y}<\infty.
\]
Exchanging the order of summation and again using
\(\alpha_k\le\bar\alpha_{\mathrm d}\), we have
\begin{align*}
\sum_{k=0}^\infty\alpha_k
\sum_{t=0}^{k-1}r_Y^{k-1-t}\alpha_t^2
&=\sum_{t=0}^\infty\alpha_t^2
\sum_{k=t+1}^\infty\alpha_kr_Y^{k-1-t}\\
&\le\frac{\bar\alpha_{\mathrm d}}{1-r_Y}
\sum_{t=0}^\infty\alpha_t^2<\infty.
\end{align*}
Therefore, \(\sum_{k=0}^\infty\alpha_kB_k<\infty\).

Since \(\alpha_k\le\bar\alpha_{\mathrm d}\) and
\(\sum_k\alpha_k^2<\infty\),
\[
\sum_{k=0}^\infty\alpha_k^3
\le\bar\alpha_{\mathrm d}\sum_{k=0}^\infty\alpha_k^2<\infty.
\]
Together with \(\sum_k\alpha_kB_k<\infty\), this implies
\[
\sum_{k=0}^\infty
\left(d_e\alpha_k^3+d_b\alpha_kB_k\right)<\infty.
\]
Define \(\varepsilon_k:=d_e\alpha_k^3+d_b\alpha_kB_k\). Then
\[
A_{k+1}\le A_k-c_x\alpha_kA_k+\varepsilon_k,
\qquad
\sum_{k=0}^\infty\varepsilon_k<\infty.
\]
Applying Lemma~\ref{lem:det_rs} with
\(v_k=A_k\), \(b_k=c_x\alpha_kA_k\), and \(c_k=\varepsilon_k\), we conclude
that \(A_k\) converges to a finite limit \(A_\infty\ge0\) and
\(\sum_{k=0}^\infty\alpha_kA_k<\infty\).

Suppose, by contradiction, that \(A_\infty>0\). Then there exists \(K_1\)
such that \(A_k\ge A_\infty/2\) for all \(k\ge K_1\), and hence
\[
\sum_{k=0}^\infty\alpha_kA_k
\ge\frac{A_\infty}{2}\sum_{k=K_1}^\infty\alpha_k=\infty,
\]
contradicting \(\sum_k\alpha_kA_k<\infty\). Thus \(A_\infty=0\), and hence
\(A_k\to0\). Together with \(B_k\to0\), this gives
\[
\lim_{k\to\infty}\mathbb E\|\mathbf x_k-\mathbf x_\star\|^2=0,
\qquad
\lim_{k\to\infty}\mathbb E\|\bar{\mathbf y}_k\|^2=0.
\]
This completes the proof.
\end{proof}
\bibliographystyle{IEEEtran}
\bibliography{autosam.bib}
\end{document}